\long\def\symbolfootnote[#1]#2{\begingroup%
\def\thefootnote{\fnsymbol{footnote}}\footnote[#1]{#2}\endgroup}
\newtheorem{thm}{Theorem}[section]
\newtheorem{prop}[thm]{Proposition}
\newtheorem{cor}[thm]{Corollary}
\theoremstyle{definition}
\newtheorem{example}[thm]{Example}
\theoremstyle{remark}
\title{Optimal H\"{o}lder regularity for the $\bar\partial$ problem on product domains in $\mathbb C^2$}
\author{Yuan Zhang}
\date{}
\begin{document}

\maketitle

\begin{abstract}
The  note concerns the $\bar\partial$ problem  on product domains in $\mathbb C^2$. We show that there exists a bounded solution operator  from  $C^{k, \alpha}$ into itself,  $k\in \mathbb Z^+\cup \{0\}, 0<\alpha< 1$. The  regularity result is optimal in view of an example of Stein-Kerzman.
\end{abstract}

\renewcommand{\thefootnote}{\fnsymbol{footnote}}
\footnotetext{\hspace*{-7mm}
\begin{tabular}{@{}r@{}p{16.5cm}@{}}
& 2010 Mathematics Subject Classification. Primary 32W05; Secondary 32A26, 32A55.\\
& Keywords. $\bar{\partial}$ problem, product domains, H\"older spaces, optimal.
\end{tabular}}

\section{ Introduction}

Let $\Omega\subset \mathbb C^n$ be the product of  planar domains whose boundaries consist of finite number of non-intersecting rectifiable Jordan curves. Then $\Omega$ is weakly pseudoconvex  with at most Lipschitz boundary.  A natural question is to look for a  solution operator to the $\bar\partial$ problem on $\Omega$ that achieves the optimal regularity.

As indicated by Example \ref{ex2} of Stein-Kerzman \cite{Kerzman}, the  $\bar\partial$ problem on product domains does not gain  regularity in general. This phenomenon is in sharp contrast with some well-understood domains bearing with nice geometry (such as strict pseudoconvexity, convexity and/or finite types), on which solutions with gained regularity always exist. See  \cite{ DFF, GL,  henkin, HR, Kerzman, LR} et al and the references therein.


Initiated by the work of Henkin \cite{henkin2}  on the bidisc, Bertrams \cite{B}, Chen-McNeal \cite{ChM}\cite{ChM2}, Fassina-Pan \cite{FP} and Jin-Yuan \cite{JY} etc investigated uniform $C^k$   and Sobolev norms of solutions on product domains.  In the H\"older category, the celebrated work of Nijenhuis and Woolf \cite{NW} constructed optimal H\"older solutions in some special {\it iterated} H\"older spaces for polydiscs.   Pan and the author \cite{PZ} recently  proved existence of (the standard) H\"older  solutions  with an infinitesimal loss of H\"older regularity by analysing the parameter dependence of the Cauchy singular integrals.

 In this note, we prove that  for product domains in $\mathbb C^2$, the solution operator in \cite{PZ} must attain the same regularity as that of the H\"older data.  Thus the operator achieves the optimal regularity in view of Example \ref{ex2}. The proof relies on a careful inspection of the H\"older regularity along each direction.

\begin{thm}\label{main}
Let  $\Omega = D_1\times D_2$, where $D_1$ and $D_2$ are two bounded domains in $\mathbb C$ with  $C^{k+1,\alpha}$ boundaries, $ k\in \mathbb Z^+\cup \{0\}, 0<\alpha< 1$. Assume $\mathbf f\in C^{k, \alpha}(\Omega)$ is a $\bar\partial$-closed $(p, q)$ form on $\Omega$  (in the sense of distributions if $k=0$), $0\le p\le 2, 1\le q\le 2$. Then there exists a solution operator $T$ to  $\bar\partial u =\mathbf f$ on $\Omega$ such that  $T\mathbf f\in C^ {k, \alpha}(\Omega)$,  $\bar\partial T\mathbf f = \mathbf f $, and $\|T\mathbf f\|_{C^{k, \alpha}(\Omega)}\le C\|\mathbf f\|_{C^{k, \alpha}(\Omega)}$, where the constant $C$ depends only on $\Omega, k$ and $ \alpha$.
\end{thm}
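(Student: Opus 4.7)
First I would reduce Theorem~\ref{main} to the case of a $\bar\partial$-closed $(0,1)$-form $\mathbf{f} = f_1\,d\bar z_1 + f_2\,d\bar z_2$ with $\partial_{\bar z_2}f_1 = \partial_{\bar z_1}f_2$ (distributionally if $k = 0$): bidegrees with $p \ge 1$ are handled coefficient-wise, and the $(0,2)$ case is solved by a single parametric one-variable Cauchy transform. The candidate solution is
\[
T\mathbf{f}(z_1,z_2) := \mathcal{S}_2 f_2(z_1,z_2) + \mathcal{S}_1\bigl(\mathcal{C}_2 f_1\bigr)(z_1,z_2),
\]
where $\mathcal{S}_j$ is the solid Cauchy transform in $z_j$ on $D_j$ (with the other variable as parameter) and $\mathcal{C}_2$ is the boundary Cauchy integral in $z_2$ over $\partial D_2$. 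Verifying $\bar\partial T\mathbf{f} = \mathbf{f}$ is routine: the Cauchy--Pompeiu formula in each variable, the holomorphy of $\mathcal{C}_2 f_1$ in $z_2$, and the closedness hypothesis combine to give the identity, with $\mathcal{S}_2(\partial_{\bar z_2}f_1) = f_1 - \mathcal{C}_2 f_1$ as the key intermediate.

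The heart of the argument is a H\"older estimate \emph{without loss}. I would invoke the elementary fact that, for $0 < \alpha < 1$, a function belongs to $C^{k,\alpha}(\Omega)$ iff each mixed partial of total order $k$ is $\alpha$-H\"older \emph{separately} in $z_1$ and in $z_2$ with uniform constants in the other variable, and the joint H\"older seminorm is controlled by the sum of the two directional ones \emph{without any loss in~$\alpha$}. This is precisely the device that avoids the infinitesimal loss of \cite{PZ}. Accordingly, for each multi-index $\beta$ of order $k$, I would estimate the $z_1$-H\"older seminorm of $\partial^\beta T\mathbf{f}$ uniformly in $z_2$ and the $z_2$-H\"older seminorm uniformly in $z_1$, and then sum.

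Each directional estimate rests on the classical Vekua/Plemelj--Privalov theorem on planar domains with $C^{k+1,\alpha}$ boundary, which gives $\mathcal{S}_j\colon C^{k,\alpha}(\bar D_j)\to C^{k+1,\alpha}(\bar D_j)$ and $\mathcal{C}_2\colon C^{k,\alpha}(\partial D_2)\to C^{k,\alpha}(\bar D_2)$ with norms depending only on $D_j$, $k$, and $\alpha$. Derivatives in the \emph{inactive} variable commute with the integral, so e.g.\ $\partial_{z_1}^{m_1}\partial_{\bar z_1}^{n_1}\mathcal{S}_2 f_2 = \mathcal{S}_2(\partial_{z_1}^{m_1}\partial_{\bar z_1}^{n_1}f_2)$, and the $z_1$-slice H\"older modulus reads off from the $L^\infty\to L^\infty$ boundedness of $\mathcal{S}_2$. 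Derivatives in the \emph{active} variable are absorbed by the one-derivative gain in Vekua's lemma, yielding the desired $C^\alpha$ bound on the $z_2$-slice; the analogous argument handles $\mathcal{C}_2 f_1$ and then $\mathcal{S}_1(\mathcal{C}_2 f_1)$.

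The main technical obstacle I anticipate is H\"older continuity in the inactive variable $z_1$ when $\partial^\beta$ also contains $z_2$-derivatives: the Beurling--Ahlfors-type singular integral produced by the $z_2$-differentiations is \emph{not} bounded on $L^\infty$, so the naive estimate fails to produce the rate $|z_1 - z_1'|^\alpha$. My plan is to exploit Vekua's estimate at one lower order,
\[
\bigl\|\mathcal{S}_2 h\bigr\|_{C^{k,\alpha}(\bar D_2)} \le C\,\bigl\|h\bigr\|_{C^{k-1,\alpha}(\bar D_2)},
\]
applied to $h = g(z_1,\cdot) - g(z_1',\cdot)$ with $g = \partial_{z_1}^{m_1}\partial_{\bar z_1}^{n_1}f_2$, and then to control $\|h\|_{C^{k-1,\alpha}(\bar D_2)}\le C|z_1-z_1'|^{\alpha}\|f_2\|_{C^{k,\alpha}(\Omega)}$ by combining the mean value theorem in $z_1$ with the joint H\"older continuity of the top-order derivative of $g$ (after absorbing a power of $\mathrm{diam}(D_1)$). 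Once the directional bounds are assembled, the separate-implies-joint H\"older principle delivers $\|T\mathbf{f}\|_{C^{k,\alpha}(\Omega)}\le C\|\mathbf{f}\|_{C^{k,\alpha}(\Omega)}$ with no H\"older loss.
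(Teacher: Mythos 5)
Your overall architecture is the same as the paper's: your candidate solution $\mathcal S_2 f_2+\mathcal S_1(\mathcal C_2 f_1)$ is the Nijenhuis--Woolf operator $T_2f_2+T_1S_2f_1$ (the paper uses the mirror image $T_1f_1+T_2S_1f_2$ of \eqref{key}), your ``separate-implies-joint'' H\"older principle is inequality \eqref{eq}, and the one-variable Vekua and Plemelj--Privalov facts are \eqref{T1}--\eqref{S1}. Your treatment of the solid term $\mathcal S_2f_2$ --- commuting inactive derivatives, using the $L^\infty$-boundedness of the weakly singular kernel for the $z_1$-slice modulus, and the lower-order Vekua estimate applied to $h=g(z_1,\cdot)-g(z_1',\cdot)$ together with the mean value theorem for the mixed-derivative case --- is sound and in effect re-derives estimate \eqref{Tb} of \cite{PZ}.

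The gap is the sentence ``the analogous argument handles $\mathcal C_2 f_1$ and then $\mathcal S_1(\mathcal C_2 f_1)$.'' There is no analogous argument for $\mathcal C_2 f_1$ by itself: the boundary Cauchy integral with parameter does \emph{not} map $C^{k,\alpha}(\Omega)$ into itself; the $\epsilon$-loss in \eqref{Sb} is sharp (Example 4.3 of \cite{PZ}, and Example \ref{ex1} above), and the loss occurs exactly in the parameter direction $z_1$. Concretely, at $k=0$ the $z_1$-directional seminorm of $\mathcal C_2 f_1$ requires a sup-norm bound on $\mathcal C_2\bigl[f_1(z_1,\cdot)-f_1(z_1',\cdot)\bigr]$; since $\mathcal C_2$ is \emph{not} bounded on $L^\infty$ (unlike $\mathcal S_2$), Plemelj--Privalov forces you to control $\|f_1(z_1,\cdot)-f_1(z_1',\cdot)\|_{C^\alpha(bD_2)}$, i.e.\ a double difference of size $|z_1-z_1'|^\alpha|z_2-z_2'|^\alpha$, which a general $f_1\in C^\alpha(\Omega)$ does not admit; your mean-value-theorem fix needs a spare derivative and is unavailable at top order. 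The repair --- and the actual content of Proposition \ref{HolderS}, which is the heart of the paper --- is to estimate the \emph{composition} $\mathcal S_1\mathcal C_2=T_1S_2=S_2T_1$ as a whole: along $z_2$ use the classical slice estimate \eqref{S1} for $S_2$ applied to $T_1f_1$ together with \eqref{Tb}; along $z_1$ commute the integrals by Fubini and use the directional \emph{gain} of $T_1$ in its own variable (the estimate $H_1^\alpha[T_1g]\lesssim\|g\|_{C(\Omega)}$ from \eqref{T1}, and its $k\ge1$ analogue), so that only boundedness --- not $z_1$-H\"older continuity --- of $\mathcal C_2 f_1$ is ever needed. One also needs the integration-by-parts identity converting active-variable derivatives of the boundary integral into boundary integrals of tangential derivatives, which is where the $C^{k+1,\alpha}$ regularity of the boundaries enters. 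Without this compensation step your proof does not close.
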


It is not  clear  whether the same result extends to  general product domains in $\mathbb C^n, n\ge 3$, as Example \ref{ex1} demonstrates.
As a direct consequence of Theorem \ref{main},  the following regularity corollary holds for smooth $(0,1)$ forms up to the boundary.

\begin{cor}\label{mains}
  Let  $\Omega: = D_1\times D_2$, where $D_1$ and $D_2$ are two bounded domains in $\mathbb C$ with  smooth boundaries.  Assume $\mathbf f\in C^{\infty}(\overline\Omega)$ is a $\bar\partial$-closed $(p,q)$ form on $\Omega$,  $0\le p\le 2, 1\le q\le 2$. Then there exists a  solution $u\in C^{\infty}(\overline\Omega)$ to $\bar\partial u =\mathbf f$ on $\Omega$ such that  for each $k\in \mathbb Z^+\cup \{0\}$, $0<\alpha < 1$, $\|u\|_{C^{k, \alpha}(\Omega)}\le C \|\mathbf f\|_{C^{k, \alpha}(\Omega)}$, where the constant $C$ depends only on $\Omega, k$ and $\alpha$.
\end{cor}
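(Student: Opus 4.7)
The plan is to apply Theorem \ref{main} directly, leveraging the fact that the operator $T$ constructed there is the explicit Cauchy-type integral operator introduced by Pan and the author in \cite{PZ}, which is defined intrinsically on integrable forms and does not depend on the choice of $(k,\alpha)$. Once this is observed, the corollary is essentially immediate; the only work is to package the estimates across all $(k,\alpha)$ into a single smooth solution.

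First, given a $\bar\partial$-closed form $\mathbf f \in C^{\infty}(\overline{\Omega})$, I would set $u := T\mathbf f$, where $T$ is the solution operator furnished by Theorem \ref{main}. Since $\mathbf f \in C^{0,\alpha}(\Omega)$ for any fixed $\alpha\in(0,1)$, Theorem \ref{main} immediately gives $\bar\partial u = \mathbf f$ on $\Omega$.

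Next, I would note that the smoothness of $\mathbf f$ up to $\overline{\Omega}$ implies $\mathbf f \in C^{k,\alpha}(\Omega)$ for every $k\in\mathbb Z^+\cup\{0\}$ and every $\alpha\in(0,1)$. Applying Theorem \ref{main} with each such pair $(k,\alpha)$ to the same form $\mathbf f$, and using that $T$ is one and the same operator, I would obtain
\[
\|u\|_{C^{k,\alpha}(\Omega)} \;=\; \|T\mathbf f\|_{C^{k,\alpha}(\Omega)} \;\le\; C_{k,\alpha}\,\|\mathbf f\|_{C^{k,\alpha}(\Omega)}
\]
for every admissible $(k,\alpha)$. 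Because $\Omega$ is bounded with Lipschitz boundary, one has the elementary identity
\[
\bigcap_{k\ge 0,\;\alpha\in(0,1)} C^{k,\alpha}(\Omega) \;=\; C^{\infty}(\overline{\Omega}),
\]
so that the single function $u$ belongs to $C^{\infty}(\overline{\Omega})$, and the H\"older estimates claimed in the corollary are exactly those just displayed.

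I do not anticipate a genuine obstacle here; the only checkpoint is confirming that the operator $T$ in Theorem \ref{main} is universal across $(k,\alpha)$ rather than a family of operators chosen dependently on the regularity index. This is clear from the construction in \cite{PZ}, which defines $T$ by a Cauchy-type integral kernel independent of any H\"older parameter. With that observation in place, the corollary follows by simply specializing Theorem \ref{main} to the sequence of parameters $(k,\alpha)$ with $k\to\infty$.
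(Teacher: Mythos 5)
Your proposal is correct and follows essentially the same route as the paper: the corollary is obtained by applying Theorem \ref{main} to the single, explicitly constructed Cauchy-type operator $T$ (which does not depend on $(k,\alpha)$), so the one solution $u=T\mathbf f$ satisfies all the estimates simultaneously and hence lies in $C^{\infty}(\overline\Omega)$. Your explicit remark that $\bigcap_{k,\alpha}C^{k,\alpha}(\Omega)=C^{\infty}(\overline\Omega)$ and that $T$ is universal across the regularity indices is exactly the (unstated) point the paper relies on when it calls the corollary a direct consequence of the theorem.
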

\medskip

\noindent\textbf{Acknowledgement:} The author thanks Professor Yifei Pan for valuable suggestions. The author dedicates the paper  to the memory of her father, Baoguo Zhang, who had consistently supported her  in life and work.

\section{Notations and preliminaries}
Let $\Omega$ be an open subset of $\mathbb C^n$. For $0<\alpha<1$, define the ($\alpha$-)H\"older semi-norm of a function $f$ on $\Omega$ to be
$$  H^\alpha[f]: =\sup_{z, z'\in \Omega, z\ne z'} \frac{|f(z) - f(z')|}{|z-z'|^\alpha}. $$ Given any $f\in C^k(\Omega), k\in \mathbb Z^+\cup\{0\}$,  its $C^k$ norm is denoted by $ \|f\|_{C^k(\Omega)}: = \sum_{|\beta|=0}^k\sup_{z\in \Omega}|D^\beta f(z)|$, where $D^\beta$ represents any $|\beta|$-th derivative operator.
 A function $f\in C^k(\Omega)$ is said to be in $  C^{k, \alpha}(\Omega)$ if
     $$\|f\|_{C^{k, \alpha}(\Omega)}: = \|f\|_{C^k(\Omega)} + \sum_{|\beta|=k}H^\alpha[D^\beta f] <\infty.$$
  We say a $(p,q)$ form  to be in $C^{k, \alpha}(\Omega)$ if all its coefficients are in $ C^{k, \alpha}(\Omega)$. When $k=0$, we suppress  $k$ in the notations  by writing $ C^{0, \alpha} (\Omega)$ as $ C^{\alpha} (\Omega)$, and  $ C^{0} (\Omega)$ as $ C (\Omega)$.
\medskip



Assume that $\Omega: = D_1\times\ldots \times D_n$ is a product of planar domains $D_j, 1\le j\le n$. Fixing $(z_1, \ldots, z_{j-1}, z_{j+1}, \ldots, z_n)\in D_1\times \ldots\times D_{j-1}\times D_{j+1}\times \ldots\times D_n$, denote  the H\"older semi-norm of a function $f$ on $\Omega$ along the $z_j$ variable  by
 \begin{equation*}\begin{split}
    &H_j^\alpha[f](z_1, \ldots, z_{j-1}, z_{j+1}, \ldots, z_n):\\
   =&\sup_{\zeta, \zeta' \in D_j, \zeta\ne \zeta'} \frac{|f(z_1, \ldots, z_{j-1},\zeta', z_{j+1}, \ldots, z_n) - f(z_1, \ldots, z_{j-1},\zeta, z_{j+1}, \ldots, z_n)|}{|\zeta'-\zeta|^\alpha}.
   \end{split}
\end{equation*}
 Then one has by triangle inequality that  \begin{equation}\label{eq}
 H^\alpha[f]   \le \sum_{j=1}^n\sup_{\substack{z_l\in D_l\\
    1\le l(\ne j)\le n}}H_j^\alpha[f](z_1, \ldots, z_{j-1}, z_{j+1}, \ldots, z_n).
\end{equation}

Suppose in addition that each slice  $D_j$  of $\Omega$ is bounded with  $C^{k+1,\alpha}$ boundary, $1\le j\le n$. We define the solid and boundary Cauchy integral of a function $f\in C^{k, \alpha}(\Omega)$ along the $z_j$ variable to be
\begin{equation*}
  \begin{split}
    T_j f(z):&=-\frac{1}{2\pi i}\int_{D_j}\frac{f(z_1, \ldots, z_{j-1},\zeta_j, z_{j+1}, \ldots, z_n)}{\zeta_j-z_j}d\bar\zeta_j\wedge d\zeta_j,\ \ \   z\in \Omega; \\
     S_j f(z):&=\frac{1}{2\pi i}\int_{bD_j}\frac{f(z_1, \ldots, z_{j-1},\zeta_j, z_{j+1}, \ldots, z_n)}{\zeta_j-z_j}d\zeta_j, \ \ z\in \Omega.
  \end{split}
\end{equation*}
The classical one-dimensional singular integral theory (see \cite{V}, or \cite[Lemma 4.1]{PZ}) states that for each $1\le j\le n$,
\begin{equation}\label{T1}
    \begin{split}
    \sup_{\substack{z_l\in D_l\\
    1\le l(\ne j)\le n}} H_j^\alpha [D_j^ k T_j f](z_1, \ldots, z_{j-1}, z_{j+1}, \ldots, z_n  ) \lesssim&\left\{
      \begin{array}{cc}
      \|f\|_{C(\Omega)}, & k=0 \\
     \|f\|_{C^{ k-1, \alpha}(\Omega)}, & k\ge 1
     \end{array}
\right. ; \end{split}
  \end{equation}
  \begin{equation}\label{S1}
  \begin{split}
    \sup_{\substack{z_l\in D_l\\
    1\le l(\ne j)\le n}}H_j^\alpha[D_j^ k S_j f](z_1, \ldots, z_{j-1}, z_{j+1}, \ldots, z_n) \lesssim& \|f\|_{C^{ k, \alpha}(\Omega)}.
    \end{split}
  \end{equation}
 Here $D_j^k$ represents $k$-th order derivative operator with respect to the $z_j$ variable, and the two quantities $a$ and $b$ are said to satisfy $a\lesssim b$ if there exists a constant $C$ dependent only on $\Omega, k$ and $\alpha$, such that $a\le Cb$.
 \medskip

 It was further proved in \cite[Theorem 1.1]{PZ} that
 for each $1\le j\le n$, the operator  $T_j$ sends  $C^{k, \alpha}(\Omega)$ into $C^{k, \alpha}(\Omega)$ with
\begin{equation}\label{Tb}
          \|T_j f\|_{C^{k, \alpha}(\Omega)} \lesssim \|f\|_{C^{k, \alpha}(\Omega)}
          \end{equation}
      for any $f\in C^{k, \alpha}(\Omega)$;     and for any small $\epsilon$ with $0<\epsilon<\alpha$, the operator  $S_j$ sends  $C^{k, \alpha}(\Omega)$ into $C^{k, \alpha-\epsilon}(\Omega)$ with
          \begin{equation}\label{Sb}
          \|S_j f\|_{C^{k, \alpha-\epsilon}(\Omega)} \lesssim \|f\|_{C^{k, \alpha}(\Omega)}
          \end{equation}
for any $f\in C^{k, \alpha}(\Omega)$. It is worth mentioning  that  both (\ref{Tb}) and (\ref{Sb}) are sharp estimates (see Example 4.2-4.3 in \cite{PZ}), in the sense that the H\"older regularity in neither inequality can  be further  improved.
\medskip

Finally, given any $\bar\partial$ closed (0,1) form $\mathbf f =\sum_{j=1}^n f_jd\bar z_j\in C^{k, \alpha}(\Omega)$, define  as in \cite{NW} \begin{equation}\label{key}
  T\mathbf f: = T_1f_1+T_2S_1f_2+\cdots+T_nS_1\ldots S_{n-1}f_n.
\end{equation}
 It is not hard to verify that $T$ is a  solution operator to $\bar\partial$ on $\Omega$ (in the sense of distributions if $k=0$),  using the identities $\bar\partial_jT_j = S_j + T_j\bar\partial_j  = id $. As a consequence of (\ref{Tb}) and (\ref{Sb}), the solution operator $T$ achieves the H\"older regularity with at most an infinitesimal loss from that of the data.

\section{ The optimal H\"older estimates  }

Let $\Omega = D_1\times D_2$, where $D_j\subset\mathbb C$ is a bounded domain with  $C^{k+1,\alpha}$ boundary, $j= 1, 2$, $k\in \mathbb Z^+\cup \{0\}, 0<\alpha< 1$.  Despite a loss of H\"older regularity of $S_j$ in $C^{k, \alpha}(\Omega)$ as in (\ref{Sb}),
 the following proposition shows that the composition operator $S_jT_l, j\ne l$ preserves exactly the same H\"older regularity.
 The key observation of the proof is that  the loss of H\"older regularity of $S_j$ only occurs along the $z_l$ direction, which is compensated by a gain of  H\"older regularity of $T_l$ in this same direction.

\medskip

 \begin{prop}\label{HolderS}
  For each $k\in \mathbb Z^+\cup \{0\}$ and $0<\alpha<1$, $ 1\le j\ne l\le 2$, there exists some constant $C$ dependent only on $\Omega, k$ and $\alpha$, such that for any $f\in C^{k, \alpha}(\Omega)$,
      \begin{equation*}
    \|S_jT_l f\|_{C^{k, \alpha}(\Omega)}\le C \|f\|_{C^{k, \alpha}(\Omega)}.
     \end{equation*}
\end{prop}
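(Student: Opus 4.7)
The starting observation is that $S_j$ integrates over $bD_j$ and $T_l$ over $D_l$ with $j\ne l$, so the two Cauchy integrals act on disjoint complex variables and a Fubini argument gives the commutation $S_jT_lf=T_lS_jf$. Combining this with (\ref{Sb}) and (\ref{Tb}) immediately yields
\[
\|S_jT_lf\|_{C^k(\Omega)}\le\|S_jT_lf\|_{C^{k,\alpha-\epsilon}(\Omega)}\lesssim\|T_lf\|_{C^{k,\alpha}(\Omega)}\lesssim\|f\|_{C^{k,\alpha}(\Omega)},
\]
so only the top-order H\"older seminorm $H^\alpha[D^\beta S_jT_lf]$ for $|\beta|=k$ requires care. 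By (\ref{eq}) it suffices to bound the $z_j$- and $z_l$-directional seminorms separately; fix $|\beta|=k$ and decompose $D^\beta=D_j^{k_j}D_l^{k_l}$ with $k_j+k_l=k$.

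For the $z_j$-direction, commute $D_l^{k_l}$ through $S_j$ (disjoint variables):
$D^\beta S_jT_lf=D_j^{k_j}S_jh$ with $h:=D_l^{k_l}T_lf$. Then (\ref{Tb}) gives $\|h\|_{C^{k_j,\alpha}(\Omega)}\le\|T_lf\|_{C^{k,\alpha}(\Omega)}\lesssim\|f\|_{C^{k,\alpha}(\Omega)}$, and (\ref{S1}) with $k_j$ in place of $k$ delivers
$\sup_{z_l}H_j^\alpha[D_j^{k_j}S_jh]\lesssim\|h\|_{C^{k_j,\alpha}(\Omega)}\lesssim\|f\|_{C^{k,\alpha}(\Omega)}$.

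The $z_l$-direction is where the key idea of the proposition appears. Using the Fubini commutation and then commuting $D_j^{k_j}$ through $T_l$ (disjoint variables),
$D^\beta S_jT_lf=D^\beta T_lS_jf=D_l^{k_l}T_l g$ with $g:=D_j^{k_j}S_jf$. Here $g$ is subject to the $\epsilon$-loss of (\ref{Sb}), but (\ref{T1}) applied in the $z_l$-variable demands only $\|g\|_{C(\Omega)}$ when $k_l=0$, or $\|g\|_{C^{k_l-1,\alpha}(\Omega)}$ when $k_l\ge1$: the top-order $\alpha$-H\"older requirement falls on $D_l^{k_l}T_l$ (where it is recovered), and the norm of $g$ involves at most $k-1$ derivatives. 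Applying the standard embedding $C^{k,\alpha-\epsilon}(\Omega)\hookrightarrow C^{k-1,\alpha}(\Omega)$ on the quasi-convex product $\Omega$, together with (\ref{Sb}), yields
\[
\|g\|_{C^{k_l-1,\alpha}(\Omega)}\le\|S_jf\|_{C^{k-1,\alpha}(\Omega)}\lesssim\|S_jf\|_{C^{k,\alpha-\epsilon}(\Omega)}\lesssim\|f\|_{C^{k,\alpha}(\Omega)},
\]
precisely the point where the fractional loss of $S_j$ is compensated by the one-order gain of $T_l$.

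The main obstacle is the delicate accounting in this last step: the estimate closes only because the derivative rearrangement forces the top-order $\alpha$-H\"older condition onto $T_l$ (governed by (\ref{T1})) rather than $S_j$ (governed by (\ref{Sb})), and the numerology $k_l-1+k_j=k-1$ leaves exactly one derivative of slack to absorb the $\epsilon$-loss via the embedding. A minor technical caveat is the justification of the pointwise commutations of derivatives through $S_j$ and $T_l$ for $f\in C^{k,\alpha}$, which is dispatched by standard smooth approximation using the uniform estimates above.
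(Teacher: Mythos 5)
Your argument is correct and follows essentially the same route as the paper: splitting $H^\alpha$ into directional seminorms via (\ref{eq}), controlling the $z_j$-direction by (\ref{S1}) together with (\ref{Tb}), and controlling the $z_l$-direction by (\ref{T1}) together with (\ref{Sb}) and the embedding $C^{k,\alpha-\epsilon}(\Omega)\hookrightarrow C^{k-1,\alpha}(\Omega)$, which is exactly where the $\epsilon$-loss of $S_j$ is traded against the derivative gain of $T_l$. The only cosmetic difference is that the paper first strips off all $z_j$-derivatives by an explicit integration by parts along $bD_j$ (rewriting $\partial_j S_jT_l f$ as $S_jT_l\tilde f$ for a modified datum $\tilde f\in C^{k-1,\alpha}(\Omega)$), so that only the $k=0$ case of (\ref{S1}) is ever invoked, whereas you keep those derivatives and apply (\ref{S1}) at order $k_j$ directly; both are legitimate given the stated form of (\ref{S1}).
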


\begin{proof}
  For simplicity, assume $j=1$ and $l=2$.  Let $\gamma:  =(\gamma_1, \gamma_2)$ with $|\gamma|\le k$. Since $S_1T_2f$ is holomorphic with respect to the $z_1$ variable, we only need to estimate $\|D_2^{\gamma_2}\partial_1^{\gamma_1} S_1T_2f\|_{C^{\alpha}(\Omega)} $.

  Write $ bD_1 = \cup_{m=1}^N \Gamma_m$, where each  Jordan curve $\Gamma_m$  is connected, positively oriented  with respect to $D_1$, and  of  length $s_m$. Let  $\zeta_1|_{ s\in [\sum_{j=1}^{m-1} s_j, \sum_{j=1}^{m} s_j)}$ be  a $C^{k+1, \alpha}$ parametrization of $\Gamma_m$ with respect to the arclength variable $s$, and $s_0 = \sum_{m=1}^{N} s_m$ is the total length of $bD_1$. In particular, $\zeta_1' = 1/\bar\zeta_1'$  on the interval $(\sum_{j=1}^{m-1}s_{j}, \sum_{j=1}^{m} s_j)$ for  each $1\le m\le N$. For any $(z_1, z_2)\in \Omega$,   integration by parts on  $(\sum_{j=1}^{m-1}s_{j}, \sum_{j=1}^{m} s_j)$ for each $1\le m\le N$ gives
  \begin{equation*}
  \begin{split}
   \partial_1 S_1  T_2f({z_1}, {z_2})
   = &\frac{1}{2\pi i}\int_{bD_1}\partial_{z_1} \left(\frac{1} {\zeta_1(s)-{z_1}}\right)T_2f(\zeta_1(s), {z_2})\zeta_1'(s)ds\\
   =&-\frac{1}{2\pi i} \sum_{m=1}^N\int_{\sum_{j=1}^{m-1}s_{j}}^{\sum_{j=1}^{m} s_j}\partial_{s} \left(\frac{1} {\zeta_1(s)-{z_1}}\right)T_2f(\zeta_1(s), {z_2})ds  \\
    =&\frac{1}{2\pi i} \sum_{m=1}^N\int_{\sum_{j=1}^{m-1}s_{j}}^{\sum_{j=1}^{m} s_j} \frac{\partial_{s} \left(T_2f(\zeta_1(s), {z_2})\right)}{\zeta_1(s)-{z_1}}ds \\
       =&\frac{1}{2\pi i} \sum_{m=1}^N\int_{\sum_{j=1}^{m-1}s_{j}}^{\sum_{j=1}^{m} s_j} \frac{T_2\left(\partial_1 f(\zeta_1(s), {z_2})\zeta_1'(s)+ \bar\partial_{1} f(\zeta_1(s), {z_2} )\bar\zeta_1'(s)\right)}{\zeta_1(s)-{z_1}}ds \\
        =&\frac{1}{2\pi i} \sum_{m=1}^N\int_{\sum_{j=1}^{m-1}s_{j}}^{\sum_{j=1}^{m} s_j} \frac{T_2\left(\partial_1 f(\zeta_1(s), {z_2})+ \bar\partial_{1} f(\zeta_1(s), {z_2} )(\bar\zeta_1'(s))^2\right)}{\zeta_1(s)-{z_1}}\zeta_1'(s)ds \\
        =&: \frac{1}{2\pi i} \int_{bD_1} \frac{T_2\tilde f(\zeta_1, z_2)}{\zeta_1-{z_1}}d\zeta_1 = S_1T_2\tilde f(z_1, z_2),
      \end{split}
  \end{equation*}
 where the function $\tilde f \in C^{k-1, \alpha}(\Omega)$ such that $\tilde f(\zeta_1(s), z_2)=\partial_{1} f(\zeta_1(s), {z_2})+ \bar\partial_{1} f(\zeta_1(s), {z_2} )(\bar\zeta_1'(s))^2$ on $[0, s_0)\times D_2$ and $\|\tilde f\|_{C^{k-1, \alpha}(\Omega)}\lesssim \|f\|_{C^{k, \alpha}(\Omega)}$ (see \cite[Lemma 6.38]{GT} on page 137 for the construction of an extension).  Repeating the above process, the proposition is  reduced to  prove for each $ \gamma \in \mathbb Z^+\cup\{0\}, \gamma \le k, 0<\alpha <1$,
  $$\|D^{\gamma}_2 S_1 T_2 f\|_{C^{\alpha}(\Omega)}\lesssim \|f\|_{C^{\gamma, \alpha}(\Omega)}$$
 for all $f\in C^{\gamma, \alpha}(\Omega)$.

Firstly, choose an $\epsilon$   such that $0<\epsilon<\alpha$. Applying the estimates (\ref{Sb}) and (\ref{Tb}) to $S_1 T_2f$, we get $$\|D^{\gamma}_2 S_1 T_2f\|_{C(\Omega)}  \le \|S_1 T_2f\|_{C^{\gamma, \alpha-\epsilon}(\Omega)} \lesssim \|T_2f\|_{C^{\gamma, \alpha}(\Omega)} \lesssim \|f\|_{C^{\gamma, \alpha}(\Omega)}.$$  

We  next verify that $H^\alpha [D^{\gamma}_2 S_1 T_2f] \lesssim \|f\|_{C^{ \gamma, \alpha}(\Omega)}$. 
 Fixing $z_2\in D_2$, since $D^{\gamma}_2 S_1 T_2f = S_1 D^{\gamma}_2 T_2f$,
 $$H_1^\alpha[ D^{\gamma}_2 S_1 T_2f](z_2) = H_1^\alpha[ S_1 D^{\gamma}_2 T_2f](z_2)\lesssim 
   \| D^{\gamma}_2 T_2f\|_{C^\alpha(\Omega)}.$$  Here the last inequality has used  (\ref{S1}) for the estimate of $S_1$ on $D_1$. Consequently, applying  (\ref{Tb}) to the operator $T_2$ in the last term, we obtain $$H_1^\alpha[ D^{\gamma}_2 S_1 T_2f] (z_2) \lesssim \|T_2 f\|_{C^{\gamma, \alpha}(\Omega)} \lesssim \|f\|_{C^{\gamma, \alpha}(\Omega)}.$$

 We further show  for each $z_1\in D_1$, $H_2^\alpha[ D^{\gamma}_2 S_1 T_2f](z_1) \lesssim \|f\|_{C^{\gamma, \alpha}(\Omega)}$. If  $\gamma \ge 1$, making use of   the identity $D^{\gamma}_2 S_1 T_2f =  D^{\gamma}_2 T_2 S_1 f$  by Fubini's theorem, and  the second case of (\ref{T1}) for  $T_2$  along the $z_2$ direction, one deduces
 $$H^\alpha_2[ D^{\gamma}_2 S_1 T_2f](z_1) = H^{\alpha}_2[ D^{\gamma}_2 T_2 S_1 f](z_1)
  \lesssim
  \|S_1f\|_{C^{\gamma-1, \alpha}(\Omega)}.$$
Together with (\ref{Sb}) for $S_1$ on $\Omega$, we infer
$$H^\alpha_2[ D^{\gamma}_2 S_1 T_2f](z_1) \lesssim \|f\|_{C^{\gamma, \alpha}(\Omega)}.$$
 When $\gamma = 0$,   the first case of (\ref{T1}) for  $T_2$   and (\ref{Sb}) for $S_1$ together give
 $$H^\alpha_2[ D^{\gamma}_2 S_1 T_2f](z_1) = H^{\alpha}_2[  T_2 S_1 f](z_1) \lesssim \|S_1f\|_{C(\Omega)} \lesssim \|f\|_{C^{ \alpha}(\Omega)}.$$
 The proof of the proposition is complete in view of (\ref{eq}).
\end{proof}
\medskip

\begin{proof}[Proof of Theorem \ref{main} and Corollary \ref{mains}] We only need to prove the case when $p=0$. If $q=2$, for any datum $ \mathbf f = fd\bar z_1\wedge d\bar z_2$, it is easy to verify that $T_1 f d\bar z_2$ is a solution to $\bar\partial $ on $\Omega$. The optimal H\"older estimate follows from that of $T_1$ operator demonstrated in (\ref{Tb}). For $q=1$, the  H\"older estimate of the solution given by (\ref{key})  is a consequence of (\ref{Tb}) and Proposition \ref{HolderS}, from which the theorem and the corollary follow.
\end{proof}
\medskip

Motivated by an $L^\infty$ example of Stein and Kerzman \cite{Kerzman}, it was shown in \cite{PZ} that the following $\bar\partial$ problem on the bidisc does not gain regularity in H\"older spaces, according to which the H\"older regularity in Theoerem \ref{main} is optimal.

\begin{example}\cite{Kerzman}\label{ex2}
Let $\triangle^2=\{(z_1, z_2)\in \mathbb C^2: |z_1|<1, |z_2|<1\}$ be the bidisc. For each $k\in \mathbb Z^+\cup \{0\}$ and $ 0<\alpha<1$, consider $\bar\partial u =\mathbf f:= \bar\partial ((z_1-1)^{k+\alpha}\bar z_2)$ on $\triangle^2$, $\frac{1}{2}\pi <\arg (z_1-1)<\frac{3}{2}\pi$. Then $\mathbf f\in C^{k,\alpha}(\triangle^2)$ is $\bar\partial$-closed $(0,1)$. However, there does not exist a solution $u\in C^{k, \alpha'}(\triangle^2)$ to $\bar\partial u =\mathbf f$ for any $\alpha'$ with $1>\alpha'>\alpha$.
 \end{example}

Unfortunately, our method does not  obtain optimal H\"older estimates  for product domains of dimension larger than 2. For instance, the solution operator  of the $\bar\partial$ problem on product domains when $n=3$ is in the form of $ T\mathbf f = T_1f_1 +T_2S_1 f_2 +T_3S_1S_2 f_3 $. Yet not all three operators involved on the right hand side of the formula are bounded in $C^{\alpha}(\Omega)$ space. In fact, in the following we adopt an example of Tumanov \cite{Tu} to show that $T_2S_1$ fails to send $C^{\alpha}(\Omega)$ into itself, due to the unboundedness of its H\"older semi-norm along the $z_3$ variable. As a result of this, Proposition \ref{HolderS} holds only when $n=2$.


\begin{example}\label{ex1}
 For $(e^{i\theta},\lambda)\in b\triangle \times \triangle$, let
$$
  \tilde h(e^{i\theta}, \lambda): =
  \left\{
      \begin{array}{cc}
     |\lambda|^\alpha, & -\pi\le \theta \le -|\lambda|^\frac{1}{2}; \\
     \theta^{2\alpha}, & -|\lambda|^\frac{1}{2}\le \theta\le 0; \\
      \theta^\alpha, & 0\le \theta\le |\lambda|;\\
      |\lambda|^\alpha, &|\lambda|\le \theta\le \pi,
    \end{array}
\right.$$
  and  $h$ be a $C^\alpha$ extension of $\tilde h$ onto $\triangle^2$. Define  $f(z_1,z_2, z_3): =h(z_1, z_3)$ for $(z_1,z_2, z_3)\in \triangle^3$. Then $f
 \in C^\alpha(\triangle^3)$. However, $T_2S_1f\notin C^\alpha(\triangle^3)$.
\end{example}

\begin{proof}
It is clear to see that $\tilde h \in C^\alpha(b\triangle\times \triangle)$. For each $z' =(z_1, z_3)\in \triangle^2$, let $h(z'):  = \inf_{w\in b\triangle \times \triangle}\{\tilde h(w)+ M|z'-w|^\alpha\}$, where $M = \|\tilde h\|_{C^\alpha(b\triangle \times \triangle)}$. Then $h\in C^\alpha(\triangle^2)$ is a $C^\alpha$ extension of $\tilde h$ onto $\triangle^2$ and $f\in C^\alpha(\triangle^3).$

In  \cite[Section 3]{PZ2}, it was verified that $H_3^\alpha[S_1h](z_1)$ is unbounded near $1\in b\triangle$, and so $S_1h\notin C^\alpha(\triangle^2)$. On the other hand, making use of the fact that  $  T_21(z) = \bar z_2$, $ z\in  \triangle^3$ (see \cite[Appendix 6.1b]{NW} for instance),
 we get $T_2S_1f(z) =  T_2 1(z) \cdot S_1h(z_1, z_3) =\bar z_2 S_1h(z_1, z_3)$, which does not belong to $C^{\alpha}(\triangle^3).$
\end{proof}
\medskip


\medskip

\noindent Yuan Zhang, zhangyu@pfw.edu, Department of Mathematical Sciences, Purdue University Fort Wayne,
Fort Wayne, IN 46805-1499, USA

\end{document}